\numberwithin{equation}{section}
\theoremstyle{plain}
\newtheorem{theorem}{Theorem}[section]
\newtheorem{proposition}[theorem]{Proposition}
\newtheorem{lemma}[theorem]{Lemma}
\newtheorem{corollary}[theorem]{Corollary}
\newtheorem{definition}[theorem]{Definition}
\newtheorem{remark}[theorem]{Remark}
\newenvironment{proof}{{\noindent \textbf{Proof}\,\,}}{\hspace*{\fill}$\Box$\medskip}
\title{On families of differential equations on two-torus with all phase-lock areas}
\author{Alexey Glutsyuk
\thanks{ CNRS, France (UMR 5669 (UMPA, ENS de Lyon) and UMI 2615 (Lab. J.-V.Poncelet)).
E-mail:
aglutsyu@ens-lyon.fr}
\thanks{National Research University Higher School of Economics (HSE), Moscow, Russia}
 \thanks{Supported by part by RFBR grants 13-01-00969-a, 16-01-00748, 16-01-00766
and  by ANR grant ANR-13-JS01-0010.}
Leonid Rybnikov\thanks{National Research University Higher School of Economics (HSE), International Laboratory of Representation Theory and Mathematical Physics, 20 Myasnitskaya st,
Moscow 101000, Russia,
and Institute for Information Transmission Problems, Moscow, Russia. \newline Email: leo.rybnikov@gmail.com}
\thanks{The work of L.R. was done at IITP under support of a grant from the Russian Science Foundation (project ? 14-50-00150).}}
\begin{document}
\maketitle

\begin{abstract} We consider two-parametric families of  non-autonomous ordinary differential equations on the two-torus with the coordinates $(x,t)$ of the type $\dot x=v(x)+A+Bf(t)$. We study its rotation number as a function of the parameters $(A,B)$.
The {\it phase-lock areas} are those level sets of the rotation number function $\rho=\rho(A,B)$ that have non-empty interiors.
V.M.Buchstaber, O.V.Karpov, S.I.Tertychnyi have studied the case, when $v(x)=\sin x$ in their joint paper. They
have observed the {\it quantization effect:}
for every smooth periodic function $f(t)$ the family of equations may have phase-lock areas only  for integer
rotation numbers. Another proof of this quantization statement
was later obtained in a joint paper by
Yu.S.Ilyashenko, D.A.Filimonov, D.A.Ryzhov. This implies the similar quantization effect for every
$v(x)=a\sin(mx)+b\cos(mx)+c$ and rotation numbers that are multiples of  $\frac 1m$.
We show that for every other analytic vector field $v(x)$ (i.e., having at least two
Fourier harmonics with non-zero non-opposite degrees and nonzero coefficients)  there exists an analytic periodic function $f(t)$ such that the corresponding family of equations has phase-lock areas for all the rational values of the rotation number.
\end{abstract}
\def\td{\mathbb T^2}
\def\ddx{\frac d{dx}}
\def\zz{\mathbb Z}
\def\rr{\mathbb R}
\def\var{\varepsilon}
\def\cc{\mathbb C}
\def\diff{Diff^{\infty}_+(S^1)}

\section{Introduction}

\subsection{Main result}

We consider families of ordinary differential equations on the torus $\td=\rr^2\slash2\pi\zz^2$ with the coordinates $(x,t)$ of the type
\begin{equation}\dot x=v(x) +A+Bf(t); \ A,B\in\rr \text{ are the parameters.}\label{2}\end{equation}
Family (\ref{2}) with
$v(x)=\nu \sin x$, $f(t)=\cos t$  arises in several models in physics, mechanics and geometry. For example, it describes
the overdamped model of the
Josephson junction (RSJ - model)  in superconductivity (our main motivation), see \cite{josephson, stewart, mcc, bar, schmidt};
it arises in  planimeters, see  \cite{Foote, foott}.

The flow map for the period $2\pi$ of equation (\ref{2})
 is a circle diffeomorphism depending on the parameters  $(A,B)$. We study its rotation number
$\rho=\rho(A,B)$ as a function of the parameters $A$ and $B$. (Normalization convention: the rotation number of
a rotation equals the angle of rotation divided by $2\pi$.) The {\it $r$-th phase-lock area} is the level set
$$\{(A,B) \ | \ \rho(A,B)=r\}\subset\rr^2$$
provided it has a non-empty interior.

 In 2001 J.Guckenheimer and Yu.S.Ilyashenko \cite{guck-il} studied the family
\begin{equation} \dot x=\var^{-1}(a-\cos x- \cos t) \label{jos}\end{equation}
as a slow-fast system with small parameter $\var>0$. They observed the following new effect:
 there exists a sequence of intervals $C_n\subset\{\var>0\}$, $C_n\to0$ as $n\to\infty$,
such that for every $\var\in C_n$ equation (\ref{jos}) has an attracting canard limit cycle. They have conjectured that this effect observed in
a very special family (\ref{jos}) should be observed in a generic slow-fast system on two-torus.

In the same year 2001 the family of equations (\ref{2}) with $v(x)=\sin x$, $f(t)=\sin t$ similar to  (\ref{jos})
was studied by V.M.Buchstaber, O.V.Karpov, S.I.Tertychnyi  \cite{buch2}  from a different point of view.
They have observed the following quantization effect of the rotation number. Another proof of this quantization statement
was later obtained in  \cite{IRF, LSh2009}.

\begin{theorem} \label{sin} \cite{buch2,IRF,LSh2009} Let
\begin{equation} v(x)=a\sin mx +b\cos mx+c, \ a,b,c\in\rr, \  m\in\zz.\label{v}\end{equation}
Then for every smooth  function $f:S^1\to\rr$ the corresponding family of equations (\ref{2}) may have phase-lock areas only for those values
of the rotation number that are integer multiples of $\frac 1m$.
\end{theorem}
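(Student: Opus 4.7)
My first move is to reduce to the case $m=1$. The substitution $y=mx$ transforms equation~(\ref{2}) into
\begin{equation*}
\dot y = ma\sin y + mb\cos y + m(A+c) + mBf(t),
\end{equation*}
still of the form $\dot y = \tilde v(y)+\tilde A+\tilde B f(t)$, where now the first Fourier harmonic is present and the parameters $(\tilde A,\tilde B)=(m(A+c),mB)$ range bijectively over $\rr^2$. Since $y=mx$, the rotation number in $y$ equals $m$ times the rotation number in $x$, so it suffices to prove that the $y$-equation has phase-lock areas only at integer rotation numbers, i.e.\ to treat $m=1$.

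Assume $m=1$ and complexify by $z=e^{iy}$. With $\alpha=(b-ia)/2$, the equation becomes the non-autonomous Riccati equation
\begin{equation*}
\dot z = i\bar\alpha + i\bigl(c+A+Bf(t)\bigr)z + i\alpha\, z^2
\end{equation*}
on $\cc P^1$. Passing to the linearization $z=u/v$ yields a $2\times 2$ traceless linear system for $(u,v)$ whose coefficient matrix lies in $\mathfrak{su}(1,1)$; its time-$2\pi$ fundamental matrix $M(A,B)\in SU(1,1)$ projects to the Poincaré map $\Phi_{A,B}$ of the original flow, a M\"obius transformation of $\cc P^1$ preserving $S^1=\{|z|=1\}$.

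Now I classify by the conjugacy invariant $\operatorname{tr}^2 M$. If $\operatorname{tr}^2 M(A,B)\geq 4$, then $\Phi_{A,B}$ is parabolic or hyperbolic and has a fixed point on $S^1$; pulling this fixed point back through the continuous lift of the flow shows $\rho(A,B)\in\zz$. If $\operatorname{tr}^2 M(A,B)<4$, then $\Phi_{A,B}$ is elliptic, conjugate in $SU(1,1)$ to a rotation by an angle determined by $\operatorname{tr}^2 M$, so $\rho(A,B)\pmod{\zz}$ is a nonconstant real-analytic function of $\operatorname{tr}^2 M$ alone. Consequently, a phase-lock area at a non-integer $r$ would produce an open set $U\subset\rr^2$ on which $\Phi_{A,B}$ is elliptic and $\operatorname{tr}^2 M(A,B)$ is constant.

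To exclude this, observe that $M(A,B)$ is the time-$2\pi$ fundamental matrix of an analytic family of linear ODEs depending analytically on $(A,B)$, so $\operatorname{tr}^2 M$ is real-analytic on $\rr^2$. Constancy on $U$ therefore forces constancy on all of $\rr^2$. But on the autonomous slice $B=0$ the equation $\dot y=a\sin y+b\cos y+c+A$ has rotation number that varies continuously and unboundedly with $A$, so $\operatorname{tr}^2 M(A,0)$ cannot be constant. This contradiction forces $r\in\zz$, which after the initial reduction completes the proof. The only subtle bookkeeping I anticipate is verifying that the lift from $\Phi_{A,B}$ to $M(A,B)\in SU(1,1)$ is globally well-defined and real-analytic in $(A,B)$; this follows directly from working with the linear system $(u,v)$ rather than the projective M\"obius map itself.
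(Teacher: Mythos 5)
Your proof is correct, but be aware of how it sits relative to the paper: the paper does not prove Theorem \ref{sin} at all --- the statement is quoted from \cite{buch2,IRF,LSh2009}, and the only argument the paper supplies is the Remark reducing general $m$ to $m=1$ by passing to the quotient $S^1\slash\zz_m$, which is precisely your substitution $y=mx$ (the $x$-equation is $\frac{2\pi}m$-periodic in $x$, so it descends; rotation numbers get multiplied by $m$; and the affine reparametrization $(\tilde A,\tilde B)=(m(A+c),mB)$ is invertible, so nonempty interiors of level sets correspond). Your treatment of the core case $m=1$ --- projectivizing the Riccati equation $\dot z=i\bar\alpha+i(c+A+Bf(t))z+i\alpha z^2$ to a traceless linear $\mathfrak{su}(1,1)$ system, so the Poincar\'e map is a M\"obius transformation of the disk, then combining ellipticity (non-integer rotation number forces $\operatorname{tr}^2M<4$, since hyperbolic and parabolic elements of $SU(1,1)$ fix points of the unit circle) with the identity $\operatorname{tr}^2M=2+2\cos(2\pi\rho)$ and real-analyticity of $\operatorname{tr}^2M$ in $(A,B)$ to propagate constancy from a putative phase-lock area to all of $\rr^2$, contradicting unboundedness of $\rho$ along $B=0$ --- is essentially the argument of the cited sources, so you have reconstructed the external proof rather than found a route the paper itself takes. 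What your write-up buys is self-containedness, and your observation that $M(A,B)$ is entire in $(A,B)$ (Picard iteration) correctly covers merely smooth $f$, where analyticity in $t$ is unavailable. Two phrasings deserve tightening but are not gaps: $\rho\ (mod\ \zz)$ is \emph{not} a function of $\operatorname{tr}^2M$ alone (the relation $\operatorname{tr}^2M=2+2\cos(2\pi\rho)$ determines $\rho$ only up to the sign ambiguity $\rho\mapsto-\rho$), yet the implication you actually need --- constant non-integer $\rho$ on an open set $U$ forces $\operatorname{tr}^2M$ constant on $U$ --- goes in the safe direction; and the degenerate cases cost nothing ($a=b=0$, i.e.\ $\alpha=0$, leaves your argument intact since the diagonal system still lies in $SU(1,1)$ and $\rho$ is strictly increasing in $A$, while $m=0$ makes the statement vacuous and is implicitly excluded by the paper as well).
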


\begin{remark} This theorem was stated and proved  in loc.cit. for $m=1$.  Its generalization to arbitrary integer values of $m$ then follows immediately by passing
to the quotient of the circle by the group of rotations by angles $\frac{2\pi k}m$, $k\in\zz$. The quotient projection transforms a field
$v(x)$ as in (\ref{v}) into the corresponding field with $m=1$.
\end{remark}

Our main result is the  theorem below showing that for any other analytic vector field $v$ the statement of Theorem \ref{sin}
  becomes drastically  false.

\begin{theorem} \label{a} Let $v$ be an analytic vector field on the circle $S^1=\rr\slash2\pi\zz$ different from (\ref{v}).
Then there exists an analytic function $f:S^1\to\rr$ such that the corresponding family of
differential equations (\ref{2}) has phase-lock areas for all the rational values of the rotation number.
\end{theorem}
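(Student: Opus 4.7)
The plan is to work perturbatively near the line $B = 0$: for each rational $p/q$, reduce existence of a phase-lock area at rotation number $p/q$ to a first-order Melnikov non-vanishing condition, and then produce a single analytic $f$ satisfying this condition at every rational.

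For each rational $p/q > 0$ there is a unique $A_{p/q}$ with $\rho(A_{p/q}, 0) = p/q$, since $\rho(A, 0) = 2\pi/\int_0^{2\pi} dy/(v(y)+A)$ is an analytic strictly monotone bijection of $\{A : v(x)+A > 0\}$ onto $(0, \infty)$ (symmetrically for $p/q < 0$). In the rectifying coordinate $\phi(x) = \omega\int_0^x dy/(v(y)+A_{p/q})$ with $\omega = p/q$, equation (\ref{2}) becomes $\dot\phi = \omega + B\,g(\phi)f(t)$, where $g(\phi) = \omega/(v(x(\phi))+A_{p/q})$. A first-order-in-$B$ expansion of the $q$-th iterate of the time-$2\pi$ Poincar\'e map, together with the Fourier orthogonality that selects the resonant index pairs $(m,n) = k(q, -p)$, yields
\begin{equation*}
P^q_{A,B}(\phi_0) - \phi_0 - 2\pi p = B\, M_{p/q}(\phi_0) + O(B^2),\quad M_{p/q}(\phi_0) = 2\pi q \sum_{k\in\zz} \hat g_{kq}(A_{p/q})\,\hat f_{-kp}\,e^{ikq\phi_0}.
\end{equation*}
Whenever $M_{p/q}$ is non-constant in $\phi_0$, the implicit function theorem applied to $P^q - 2\pi p = 0$ produces hyperbolic fixed points that persist on an open region of parameters near $(A_{p/q}, 0)$, so the phase-lock area at $p/q$ has non-empty interior.

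The core of the proof is the following rigidity lemma: if $v$ is analytic and not of the form (\ref{v}), then for every rational $p/q$ some $k\neq 0$ satisfies $\hat g_{kq}(A_{p/q})\neq 0$; equivalently, the $q$-cyclic orbital average $\frac{1}{q}\sum_{j=0}^{q-1} g_{A_{p/q}}(\phi+2\pi j/q)$ is not identically constant. I would argue by contradiction: assume the averaging holds for some $p/q$, start from the representation $\hat g_{kq}(A) = \frac{\omega^2}{2\pi}\int_0^{2\pi}(v+A)^{-2}e^{-ikq\phi(x)}dx$, and use the identity $\frac{d}{dx}e^{-ikq\phi(x)} = -\frac{ikp}{v+A}e^{-ikq\phi(x)}$. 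Iterated integration by parts converts the vanishing of all these coefficients into recursion relations among the Fourier harmonics of $v$, and the hardest step is to show that the only analytic $v$ consistent with them has Fourier support in $\{0\}\cup\{\pm m\}$ for a single $m\in\zz$, i.e., $v$ is of form (\ref{v}). This is the Fourier-analytic converse of the Riccati/M\"obius rigidity underlying Theorem \ref{sin}.

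Granting the lemma, pick $f(t) = \sum_{n\in\zz} e^{-|n|}e^{int}$, which is analytic on $S^1$ and has every Fourier coefficient non-zero. For every rational $p/q \neq 0$ the witness $k$ of the lemma makes $\hat g_{kq}(A_{p/q})\hat f_{-kp}\neq 0$, so $M_{p/q}$ is non-constant and phase-lock at $p/q$ opens near $(A_{p/q}, 0)$. For $p/q = 0$, since $v$ is necessarily non-constant (a constant $v$ is of the form (\ref{v}) with $a=b=0$), the interval of $A$ with $\min_x v < -A < \max_x v$ is non-degenerate and the fixed points of $\dot x = v(x)+A$ persist for small $B$, yielding phase-lock at $0$. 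The main obstacle is the rigidity lemma: extracting the algebraic form (\ref{v}) from a purely Fourier-theoretic vanishing condition on the rectified reciprocal $g_{A_{p/q}}$ is the technical heart of the argument and amounts to the first-order converse of Theorem \ref{sin}.
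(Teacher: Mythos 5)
Your argument stands or falls with the ``rigidity lemma'', and this lemma is not only left unproved (you yourself call it the technical heart) --- it is false. One can see this by reverse-engineering the rectified picture. Fix $\omega=\frac12$ and choose any positive analytic function $g(\phi)=c\left(1+\tfrac12\cos\phi+\tfrac18\cos 3\phi\right)$, with $c>0$ normalized so that $\int_0^{2\pi}d\phi/g(\phi)=2\pi$. Define $x(\phi)$ by $dx/d\phi=1/g(\phi)$ (a degree-one analytic circle diffeomorphism by the normalization) and set $v(x)+A:=\omega/g(\phi(x))$, splitting off any constant $A$. Then $\int_0^{2\pi}\frac{dx}{v(x)+A}=\int_0^{2\pi}\frac{g}{\omega}\cdot\frac{d\phi}{g}=\frac{2\pi}{\omega}$, so $\rho(A,0)=\frac12$ and $A=A_{1/2}$, and by construction the rectified perturbation coefficient at this very parameter value is the chosen $g$, whose Fourier support is $\{0,\pm1,\pm3\}$. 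Hence $\hat g_{2k}(A_{1/2})=0$ for all $k\neq0$, and your Melnikov function $M_{1/2}$ is constant \emph{for every} forcing $f$. Yet this $v$ is not of type (\ref{v}): if it were, the rectifying map would be a M\"obius transformation (in the $m$-fold cover), which pushes the perturbing field $\ddx$ forward to a field with harmonics only in $\{0,\pm m\}$, i.e.\ $g$ would have a single positive frequency, contradicting the presence of both frequencies $1$ and $3$. So for this $v$ the $\frac12$-tongue does not open at first order in $B$ no matter how you choose $f$; it can at best open at some higher order, and your scheme provides no control over that order. The difficulty is structural: ``$v$ not of type (\ref{v})'' constrains $v$, while your lemma is a condition on the Fourier support of $g_{A_{p/q}}$ at one specific parameter value per rational, and these are genuinely independent, as the example shows.

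This is precisely the trap the paper's proof is built to avoid: it does no perturbation theory near $B=0$ at all. Instead it works at $B=1$, where $f$ enters non-perturbatively: density of the Lie algebra generated by $\ddx$ and $v'=[\ddx,v]$ (Theorem \ref{gen}) yields density of the group $G^1$ of flow compositions (Corollary \ref{densecol}); an analytic deformation inside $G^1$ produces, for each $\frac pq$, an element of $G^1_+$ with rotation number $\frac pq$ and a hyperbolic periodic orbit (Lemma \ref{grot}, the one place analyticity of $v$ is used); such elements are approximated by time-$2\pi$ flow maps of equations (\ref{2}) with trigonometric polynomial $f$ (Proposition \ref{approx}); and a diagonal limit merges the countably many trigonometric polynomials into a single analytic $f$. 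Your idea of one fixed $f$ with all nonzero Fourier coefficients is not the problem --- the paper too ends with a single $f$, obtained by a limit --- and your treatment of $\rho=0$ and the implicit-function bookkeeping near $(A_{p/q},0)$ are fine; but without a replacement for the false first-order rigidity statement (e.g.\ an all-orders analysis, which is essentially what Theorem \ref{sin} rules out having a cheap converse), the proposal does not prove the theorem.
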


\begin{remark} A vector field on $S^1$ is not of type (\ref{v}), if and only if
 its Fourier series has at least two  harmonics of nonzero non-opposite degrees with nonzero coefficients:
 \begin{equation}
 v(x)=\sum_ka_k e^{ikx}, \ a_k\in\cc;  \ a_l, a_n\neq0 \text{ for some } l,n\neq0, \ l\neq\pm n.\label{fourier}\end{equation}
 \end{remark}

\begin{remark} Each phase-lock area intersects each horizontal line $B=const$ by a segment (or a point). This follows from continuity of the rotation
number and its monotonicity in $A$ (see Proposition \ref{propmon} below). If the field $v$ is analytic, then for every given small $\var>0$
the intersections of the
phase-lock areas with the strip $|B|<\var$ also have non-empty interiors and are usually  called the {\it Arnold tongues}. The classical
Arnold tongue picture
for the Arnold family of circle diffeomorphisms can be found in \cite[p. 110]{arnold}. The statement  of
Theorem \ref{a} implies that the corresponding family of equations restricted to the latter strip has  Arnold tongues for all the rational values of the rotation number.
\end{remark}
\subsection{Plan of the paper and  sketch of proof of Theorem \ref{a}}

The definition and basic properties of the rotation number are recalled in the next subsection. Theorem \ref{a} is proved in Section 2.

Everywhere below for a vector field $w$ by $g^t_w$ we denote its time $t$ flow map.
By $(g^t_{w_1})_*w_2$ we denote the image of the field $w_2$ under the time $t$ flow map of
the field $w_1$.

Let $v$ be a vector field on $S^1$. Denote $m=m(v)$ the greatest common divisor of
the degrees of its Fourier harmonics with non-zero coefficients:
\begin{equation}m(v)=G.C.D.\{ k\in\zz\setminus 0; \ a_k\neq0\}\in\mathbb N, \text{ see (\ref{fourier}).}\label{mv}\end{equation}
We say that $v$ {\it has coprime harmonics,} if $m(v)=1$.

\begin{remark}
The group $\zz_m$ acts on $S^1$ by rotations of order $m$. For $m=m(v)$
the quotient projection $S^1\mapsto S^1_m=S^1\slash\zz_m$ transforms the field $v$ to that with coprime harmonics: it divides
the degrees of its harmonics by $m$.
\end{remark}

Let $v$ be a vector field that is not of type (\ref{v}).
Without loss of generality  we consider that $v$ has coprime harmonics: one can achieve this by passing to the above quotient.
Let  $\Lambda$ denote the Lie algebra generated by the constant vector field $\ddx$, the commutator
$v'=[\ddx,v]$ and its images under all the circle rotations.

Theorem \ref{a} and its proof are motivated by the following theorem proved in Subsection 2.2.

\begin{theorem} \label{gene}  For every
$C^{\infty}$-smooth
vector field $v$ on $S^1$  having coprime harmonics
there is the following alternative:

- either $v(x)=a\sin x+b\cos x +c$ and the Lie algebra $\Lambda$
coincides with that of M\"obius group $PSL_2(\rr)$ of conformal automorphisms of the unit disk;

- or $\Lambda$ is $C^{\infty}$- dense in the space $\chi(S^1)$ of $C^{\infty}$ vector fields on $S^1$.
\end{theorem}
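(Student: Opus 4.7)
The plan is to pass to Fourier series and reduce the problem to a combinatorial question about an integer subset closed under ``sum of distinct elements''. Expand $v(x)=\sum_n a_n e^{inx}$ and let $T:=\{0\}\cup\{n\in\zz\setminus\{0\}:a_n\ne0\}$; this set is symmetric (as $v$ is real), contains $0$, and satisfies $\gcd(T\setminus\{0\})=1$ by the coprime harmonics hypothesis. The first step is to observe that the $C^{\infty}$-closure $\overline{\Lambda}$ contains every pure Fourier mode $e^{inx}\ddx$ with $n\in T$ (understood in the complexification): indeed, the closed linear span of the rotations of $v'$ is a closed rotation-invariant subspace of $\chi(S^1)$ containing $v'$, and by harmonic analysis on the circle the smallest such subspace decomposes as the direct sum of the real $2$-dimensional Fourier eigenspaces at which $v'$ has nonzero Fourier coefficient, namely those indexed by $T\setminus\{0\}$; together with $\ddx\in\Lambda$ this covers all $n\in T$.

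The second step is to compute the key bracket
\[
  [e^{imx}\ddx,\,e^{inx}\ddx]=i(n-m)\,e^{i(m+n)x}\ddx,
\]
which is nonzero precisely when $m\ne n$. Let $\tilde T\supseteq T$ denote the set of integers $n$ for which the $n$-th Fourier mode lies in $\overline{\Lambda}$. The $C^{\infty}$-continuity of the Lie bracket together with closedness of $\overline{\Lambda}$ implies that $\tilde T$ is a symmetric subset of $\zz$, contains $0$, and is closed under the \emph{partial} operation $(m,n)\mapsto m+n$ (defined only when $m\ne n$).

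The proof then splits according to whether $T\subseteq\{-1,0,1\}$. In the affirmative case, coprimality forces $T=\{-1,0,1\}$, and the distinctness constraint excludes $\pm2=\pm(1+1)$, so $\tilde T=T$. Thus $\Lambda$ equals the three-dimensional real span of $\ddx,\,\sin x\,\ddx,\,\cos x\,\ddx$, which is the well-known Lie algebra $\mathfrak{psl}_2(\rr)$ of M\"obius automorphisms of the unit disk acting on the boundary circle; a direct computation of the three brackets confirms this. In the remaining case $T\not\subseteq\{-1,0,1\}$, I claim $\tilde T=\zz$: by symmetry of $\tilde T$ the partial sum operation produces differences $m+(-n)=m-n$ whenever $m\ne -n$, and iterating Euclidean-algorithm-type reductions on a pair of positive elements of $T$ with coprime gcd eventually produces $\pm1\in\tilde T$; repeated translation by $\pm1$ then fills out all of $\zz$. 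Density of trigonometric polynomials in $C^{\infty}(S^1)$ then gives $\overline{\Lambda}=\chi(S^1)$.

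The hard part will be this combinatorial step: the partial sum operation requires distinct summands, so one must verify that the Euclidean-algorithm reductions never get stuck at a step where the two intended summands would coincide. This is most delicate in small examples such as $T=\{0,\pm p,\pm q\}$ with $1<p<q$ and $\gcd(p,q)=1$, where only a handful of admissible sums ($p+q$, $-p+q$, $p+(-q)$) are initially available, and one must check by hand that these already populate $\tilde T$ enough to bootstrap a full Euclidean reduction down to $\pm1$.
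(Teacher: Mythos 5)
Your proposal is correct and is essentially the paper's own proof in different functional-analytic clothing: the paper takes closures of $\Lambda$ in the Sobolev spaces $H_s(S^1)$, uses unitary representation theory of the rotation group to see that any invariant closed subspace is a sum of the isotypic pieces $H_s(S^1)^j$, computes $[H_s(S^1)^j,H_s(S^1)^k]=H_s(S^1)^{|j-k|}+H_s(S^1)^{j+k}$ for $j\neq k$ (your bracket $[e^{imx}\ddx,e^{inx}\ddx]=i(n-m)e^{i(m+n)x}\ddx$ is exactly this, complexified), runs the same gcd argument on the index set, and finishes with the Sobolev embedding theorem, whereas you work directly in the $C^\infty$-closure with character averaging --- an inessential difference. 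Moreover, the step you flag as ``the hard part'' is not actually delicate: since $\tilde T$ is symmetric, a difference $m-n=m+(-n)$ of two \emph{positive} elements always has distinct summands (they have opposite signs), so the subtractive Euclidean algorithm on $\tilde T\cap\zz_{>0}$ never stalls and produces $\gcd(T\setminus\{0\})=1$; the only genuinely forbidden operation is doubling, which is needed only to reach $2$ from $1$, and the hypothesis $T\not\subseteq\{-1,0,1\}$ supplies some $n\geq2$ in $\tilde T$ from which repeatedly adding $-1$ (admissible since $n\neq-1$) yields $2,3,\dots$ and hence $\tilde T=\zz$. (One small imprecision: the positive part of $T$ need not contain a coprime \emph{pair}, e.g.\ $\{6,10,15\}$, so the Euclidean reduction should be run on the whole set, using that a set of positive integers closed under absolute differences contains its gcd --- but this changes nothing in the argument.)
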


Thus, in our case, when $v$ is not of type (\ref{v}),  $\Lambda$ is dense in $\chi(S^1)$.

\begin{remark} A theorem of Sophus Lie \cite{lie}
 says  that every Lie algebra of smooth {\it germs} at 0 of vector fields on $\rr$
is either at most three-dimensional, or infinite-dimensional (see a recent proof  in \cite[p. 132]{rs}).
Some  classes of  infinite-dimensional Lie algebras of  germs of vector fields in any dimension
were classified by Elie Cartan, see \cite{demaz} for references and related results on filtered Lie algebras. A related result
on actions of groups of circle diffeomorphisms on tuples of points in terms of the  group structure was obtained in \cite[theorem A, p. 125]{rs}.
Specialists believe that Theorem \ref{gene} is known, but we have not found it in literature.
\end{remark}

Let $Diff^{\infty}_+(S^1)$ denote the group of
{\it positive} (i.e., orientation-preserving)
circle diffeomorphisms.
Let $G=G(v)\subset Diff^{\infty}_+(S^1)$ denote the group generated by flows of the vector fields $\ddx$ and $v$. It consists of products
$$g(t_1,\dots,t_k,\tau_1,\dots,\tau_k)=g^{t_1}_{\ddx}\circ g^{\tau_1}_v\circ\dots\circ g^{t_k}_{\ddx}\circ g^{\tau_k}_v.$$
Set
$$G^s=\{ g(t_1,\dots,\tau_k) \in G \ | \ \sum_j\tau_j=2\pi s\};$$
\begin{equation}
 G^1_+=\{ g(t_1,\dots,\tau_k)\in G^1 \ | \ \tau_j>0 \text{ for all } j\}.\label{g1}\end{equation}

\begin{definition} A circle diffeomorphism $\phi:S^1\to S^1$ {\it belongs to the class F(v)}, if it can be realized as a period $2\pi$ flow map of
a differential  equation of the type (\ref{2}) with
$f(t)$ being a trigonometric polynomial.
\end{definition}

Recall that a $q$-periodic point $x_0$  of a
positive
circle diffeomorphism
$f$ is {\it hyperbolic,} if its multiplier $(f^q)'(x_0)$ is different from 1; then its periodic orbit is also called hyperbolic.
A
positive
circle diffeomorphism is {\it hyperbolic,} if it has periodic orbits and all of them are hyperbolic.

For any family of differential equations depending on parameters analytically, the condition that the corresponding flow map has a given rational rotation number and at least one hyperbolic periodic orbit is an open condition on the parameters. Thus, to prove Theorem \ref{a}, we have to show that for every $\rho\in\mathbb Q$ there exists at least some class F(v) diffeomorphism
with the rotation number $\rho$ and at least one hyperbolic orbit.

The proof of Theorem \ref{a} is done as follows.

Step 1.   We show (Proposition~\ref{approx} in Subsection 2.1)  that the class $F(v)$ diffeomorphisms  accumulate to all of $G^1_+$.

\def\mcg{\mathcal F}

Step 2. We show that the set $G^1\supset G^1_+$
  is $C^{\infty}$-dense in $Diff^{\infty}_+(S^1)$ (Corollary \ref{densecol}).
  This basically follows from density of the Lie algebra $\Lambda$ (Theorem \ref{gene}).  In more detail,
  the density of the algebra $\Lambda$ implies that the group $\mcg$ generated by the flows of its generators $\ddx$ and $[\ddx,v]$ is dense
  (Theorem \ref{densedif}).
  This implies that the group $G^0$ is dense, since
   its closure contains
   all of $\mcg$ (Proposition \ref{propd}).
  This immediately implies density of $G^1$.

Step 3. We show (Lemma \ref{grot} in Subsection 2.3) that
 for every rational number $\frac pq$ the set $G^1_+$ contains a diffeomorphism with rotation number $\frac pq$ and at least
one hyperbolic periodic orbit (either attractor, or repeller). We already know (step 2) that  $G^1$  is dense and hence,
contains hyperbolic circle diffeomorphisms with all the rotation numbers (since the existence of a hyperbolic orbit with a given rotation number is an open condition). We show that each one of them can be
analytically deformed in $G^1$ to an element of
the set $G^1_+$ without changing rotation number and keeping one and the same point periodic. The multiplier of the corresponding
periodic orbit will be an analytic function of the deformation parameters that is not identically equal to one. Thus, we can achieve that the
deformed element in $G^1_+$ have the given rotation number and a hyperbolic orbit. This will prove Lemma \ref{grot}.
 This is the main place where we use the analyticity of the field $v$.

Step 4. Constructing families (\ref{2}) with all phase-lock areas.
  Proposition~\ref{approx} and Lemma~\ref{grot} together imply that for every $\frac pq\in\mathbb Q$ there exists a trigonometric polynomial $f(t)$ for which the
corresponding family of equations (\ref{2}) has phase-lock area with rotation number $\frac pq$. We then deduce that for every $\frac pq$ there
exists an $N=N(\frac pq)$ such that for a generic\footnote{Everywhere in the paper by ``generic'' we mean ``topologically generic'': belonging to
an open dense subset.} trigonometric polynomial $f(t)$ of degree at most $N$ the corresponding family
of equations (\ref{2}) has phase-lock area corresponding to the rotation number value $\frac pq$. This together with persistence of phase-lock areas
under small perturbations shows that for
an appropriate
analytic function $f:S^1\to \rr$ the corresponding family (\ref{2}) has phase-lock
areas for all the rational  values of the rotation number. This will prove Theorem \ref{a}.

\subsection{Rotation numbers and phase-lock areas}

Let $\td=\rr^2\slash2\pi\zz^2$ be a torus with coordinates $(x, t)$. Consider the flow given by the nonautonomous differential equation
\begin{equation}\dot x=\frac{dx}{dt}=\phi(x,t) \label{eq}\end{equation}
with smooth right-hand side. The time $t$ flow mapping is a diffeomorphism $h_t:S^1\to S^1$ of the space circle. Consider the universal covering
$$\rr\to S^1=\rr\slash2\pi\zz$$
over the space circle. The flow mappings of equation (\ref{eq}) can be lifted to the universal covering and induce a smooth family of
diffeomorphisms
$$H_{r,t}:\rr\times\{ r\}\to\rr\times\{ r+t\}, \ H_{r,0}=Id.$$
Recall that for every $(x,r)\in\rr\times S^1$ there exists a limit
\begin{equation} \rho=\lim_{n\to+\infty}\frac1{2\pi n} H_{r,2\pi n}(x)\in\rr,\label{rotation}\end{equation}
which depends neither on $r$, nor on $x$ and is called the {\it rotation number of the flow of equation} (\ref{eq}) (e.g., see \cite[p. 104]{arnold}).


Now consider an arbitrary analytic family of equations


\begin{equation} \dot x=\psi(x,t,B)+A, \  (x,t)\in\td, \ A,B\in\rr.\label{equ1}\end{equation}

\begin{proposition} \label{propmon} \cite[pp. 104, 109]{arnold} The rotation number $\rho=\rho(A,B)$ of the flow of equation (\ref{equ1}) is a continuous function of
the parameters $(A,B)$ and a nondecreasing function of $A$. If, for some parameter value, the flow mapping $h_{2\pi}=h_{A,B,2\pi}:
S^1\times\{0\}\to S^1\times\{0\}$ of equation (\ref{equ1}) has a periodic orbit of  period $q$, and the cyclic order of the orbit on the circle is the
same as for an orbit of the rotation $x\mapsto x+\frac pq$, $p\in\zz$, then the rotation number is equal to $\frac pq(mod\ \zz)$.
\end{proposition}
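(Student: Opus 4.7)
The plan is to prove the three claims of Proposition~\ref{propmon} separately, using only standard ODE-theoretic facts together with the definition (\ref{rotation}) of the rotation number. The general strategy is to lift everything to the universal covering $\mathbb{R}\to S^1$, reduce to statements about the lifted flow $H_{r,t}^{A,B}$, and exploit the order-preserving nature of one-dimensional flows.

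For continuity, I would first recall that for the family (\ref{equ1}), the classical theorem on continuous dependence of solutions of an ODE on initial data and parameters shows that $(A,B,x)\mapsto H_{0,2\pi}^{A,B}(x)$ is continuous, so the lifted time-$2\pi$ map depends continuously on $(A,B)$ in the $C^0$ topology. It then suffices to prove the general lemma that the rotation number of a lifted orientation-preserving circle homeomorphism $\widetilde F:\mathbb{R}\to\mathbb{R}$ (commuting with $x\mapsto x+2\pi$) is continuous in $\widetilde F$ with respect to the $C^0$ topology. This follows from the standard quantitative estimate
\begin{equation*}
\left|\tfrac{1}{2\pi n}\widetilde F^n(x) - \rho(\widetilde F)\right| \le \tfrac{1}{n},
\end{equation*}
which gives uniform convergence of the sequence defining $\rho$, hence continuity of the limit.

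For monotonicity in $A$, I would apply the scalar ODE comparison principle: if $A_1<A_2$ then the right-hand side of (\ref{equ1}) with $A=A_2$ strictly dominates that with $A=A_1$, so every solution of the second equation stays pointwise above the corresponding solution of the first for all positive times. Passing to lifts we get $H_{0,t}^{A_2,B}(x)\ge H_{0,t}^{A_1,B}(x)$ for $t\ge 0$, and dividing by $2\pi n$ with $t=2\pi n$ and letting $n\to\infty$ in (\ref{rotation}) gives $\rho(A_2,B)\ge\rho(A_1,B)$.

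For the rational rotation number claim, I would lift the periodic orbit of period $q$ to a point $\widetilde x_0\in\mathbb{R}$. The assumption that the cyclic order of the orbit $\{h_{2\pi}^j(x_0)\}_{j=0}^{q-1}$ agrees with the cyclic order of the orbit of $x\mapsto x+\tfrac{p}{q}$ means precisely that, after a suitable choice of lift of $h_{2\pi}$, one has $\widetilde{h_{2\pi}}^{\,q}(\widetilde x_0)=\widetilde x_0+2\pi p$. Iterating gives $\widetilde{h_{2\pi}}^{\,nq}(\widetilde x_0)=\widetilde x_0+2\pi np$, so
\begin{equation*}
\rho=\lim_{n\to\infty}\frac{1}{2\pi nq}\left(\widetilde x_0+2\pi np -\widetilde x_0\right)=\frac{p}{q}\pmod{\mathbb{Z}},
\end{equation*}
the mod $\mathbb{Z}$ ambiguity reflecting the choice of lift. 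The main subtlety, and where I expect the bulk of the bookkeeping, is making precise the relationship between the combinatorial cyclic-order condition and the arithmetic identity $\widetilde{h_{2\pi}}^{\,q}(\widetilde x_0)=\widetilde x_0+2\pi p$: one must argue that for an orientation-preserving circle homeomorphism, a periodic orbit whose combinatorics match those of the rigid rotation $R_{p/q}$ forces the lift to realize the translation $+2\pi p$ on $\widetilde x_0$. This is a standard but non-trivial lemma about homeomorphisms of $S^1$ with rational rotation number, and it is the only place the planar/rotational structure is really used; the rest of the proposition follows from monotonicity and continuity of one-dimensional ODE flows.
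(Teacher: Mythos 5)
This proposition is quoted from Arnold's book \cite{arnold} without proof in the paper, and your proposal correctly reconstructs the standard argument found in that cited source: the uniform estimate $\bigl|\widetilde F^n(x)-x-2\pi n\rho\bigr|\le 2\pi$ giving continuity via uniform convergence, the scalar comparison principle giving monotonicity in $A$, and lift arithmetic for the rational case, so this is essentially the same (cited) approach. The one step you leave implicit is, as you yourself flag, the combinatorial lemma identifying the integer: periodicity alone already yields $\widetilde h_{2\pi}^{\,q}(\widetilde x_0)=\widetilde x_0+2\pi p'$ for \emph{some} $p'\in\mathbb{Z}$, and the cyclic-order hypothesis is needed precisely to conclude $p'\equiv p \pmod q$ (changing the lift shifts $p'$ by multiples of $q$), which is indeed standard and acceptable to defer.
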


\begin{remark}
The {\it rotation number of an orientation preserving diffeomorphism} $h:S^1\to S^1$, $S^1=\rr\slash2\pi$
is defined analogously. Let us consider its lifting $H:\rr\to\rr$ to the universal
cover, which is uniquely defined up to post-composition with translation by an integer multiple of $2\pi$. The limit
$\rho=\lim_{n\to+\infty}\frac1{2\pi n}H^n(x)$ exists. It is independent on $x$, well-defined modulo $\zz$
 and is called the rotation number $\rho=\rho(h)$, see the same chapter in loc.cit.  The rotation number of the time $2\pi$ flow mapping
 of a differential equation (\ref{eq}) coincides modulo $\zz$ with the rotation number of the equation.
\end{remark}

\section{Realization of phase-lock areas: Proof of Theorem \ref{a}}

\subsection{Approximations of products of flows of vector fields by flows of periodic equations}

\begin{proposition} \label{approx} For every smooth vector field $v$ on $S^1$ the corresponding subset $G^1_+\subset\diff$
from (\ref{g1}) is contained in the $C^{\infty}$-closure of class $F(v)$ diffeomorphisms. In more detail, consider an arbitrary
$g\in G^1_+$, let $\rho=\rho(g)$ denote its rotation number. Then for every $\alpha\equiv\rho(mod \zz)$, $k\in\mathbb N$,  $\var>0$
 there exist a trigonometric polynomial
$f(t)$
such that the corresponding equation (\ref{2}) with $A=0$, $B=1$
has rotation number $\alpha$
and its time $2\pi$ flow map is $\var$-close to $g$ in the $C^k$ topology.
\end{proposition}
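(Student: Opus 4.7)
The plan is to realize
\[ g = g^{t_1}_{\ddx}\circ g^{\tau_1}_v\circ\cdots\circ g^{t_k}_{\ddx}\circ g^{\tau_k}_v, \qquad \tau_j>0,\ \sum_j\tau_j=2\pi, \]
as a $C^k$-limit of time-$2\pi$ flow maps of equations $\dot x=v(x)+f_\delta(t)$, where $f_\delta$ is a $C^\infty$ $2\pi$-periodic function consisting of $k$ narrow spikes. At the outset I adjust the $t_j$'s by integer multiples of $2\pi$ (which leaves $g$ unchanged as a circle diffeomorphism) so that the natural lift of $g$ has rotation number equal to $\alpha$. Then I construct $f_\delta$ as follows: pick $k$ pairwise disjoint intervals $I_1,\ldots,I_k\subset S^1$ of width $\delta$, arranged so that the arc just preceding $I_j$ has length close to $\tau_{k-j+1}$ (the reversed indexing reflects right-to-left composition of flows); set $f_\delta\equiv 0$ outside $\bigcup I_j$, and on $I_j$ take $f_\delta$ to be a smooth bump with $\int_{I_j}f_\delta\,dt=t_{k-j+1}$. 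The required trigonometric polynomial will be obtained at the very end by truncating the Fourier series of $f_\delta$.

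For the $C^k$ convergence, note that on each no-spike interval $f_\delta\equiv 0$, so the flow equals the autonomous flow $g^{\tau}_v$ for some $\tau$ tending to $\tau_{k-j+1}$ as $\delta\to 0$. On each spike interval $I_j=[a,a+\delta]$, I introduce the substitution $y=x-F_\delta(t)$ with $F_\delta(t):=\int_a^tf_\delta(s)\,ds$, which transforms the equation into $\dot y=v(y+F_\delta(t))$, whose right-hand side is bounded independently of $\delta$ since $|F_\delta|\le|t_{k-j+1}|$ and $v$ is bounded on $S^1$. Over the $\delta$-length spike, integration and Gronwall estimates applied to the variational equations yield $y(a+\delta)-y(a)=O(\delta)$ together with analogous $O(\delta)$ bounds on all $C^k$-derivatives in the initial datum; undoing the substitution shows that in the original variable $x$, the spike-interval flow is $O(\delta)$-close in $C^k$ to the pure translation $g^{t_{k-j+1}}_{\ddx}$. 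Concatenating the $k$ no-spike pieces and $k$ spike pieces gives the desired $C^k$-convergence of the time-$2\pi$ flow of $\dot x=v(x)+f_\delta(t)$ to $g$.

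To pass from the smooth $f_\delta$ to a trigonometric polynomial, I truncate its Fourier series at a sufficiently large degree $N$; since $f_\delta\in C^\infty(S^1)$, this converges in $C^k$, and smooth dependence of the ODE flow on its right-hand side transfers this to $C^k$-convergence of the time-$2\pi$ flow maps. The rotation number of the truncated equation converges to $\alpha$ as $\delta\to 0$ and $N\to\infty$, but may differ from $\alpha$ by some small $\epsilon=\epsilon(\delta,N)$. To correct this, add to $f$ a small constant $c=c(\delta,N)$: by Proposition~\ref{propmon}, adding $c$ to $f$ is equivalent to shifting the parameter $A$ by $c$, so the rotation number depends continuously and monotonically on $c$ and sweeps through every real value as $c$ ranges over $\rr$. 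Hence some $c=c(\delta,N)$ makes the rotation number exactly $\alpha$; a compactness/continuity argument based on convergence of the whole construction to $g$ forces $c(\delta,N)\to 0$ as $\delta\to 0$ and $N\to\infty$, so the added constant does not spoil the $C^k$-closeness to $g$. The resulting $f$ is still a trigonometric polynomial.

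The main technical obstacle is the spike-interval convergence, since the spikes of $f_\delta$ grow unboundedly tall as $\delta\to 0$ and the right-hand side of the ODE blows up in any $C^0$-type norm. The substitution $y=x-F_\delta(t)$ is the crucial device: it trades the singularly large additive forcing for a bounded shift inside the argument of $v$, reducing the analysis to a short-time ODE with uniformly controlled right-hand side, to which standard Gronwall and variational estimates apply directly.
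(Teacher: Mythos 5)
Your proposal is correct and follows essentially the same route as the paper: a $2\pi$-periodic forcing that vanishes between $k$ narrow tall spikes (so the flow alternates between $g^{\tau_j}_v$ on the gaps and approximate translations $g^{t_j}_{\ddx}$ across the spikes), followed by Fourier truncation to a trigonometric polynomial and a small monotone adjustment to pin the rotation number exactly at $\alpha$. The only differences are technical and equivalent in substance: the paper proves the spike-interval convergence by the time rescaling $\hat t=\delta^{-1}(t-T_{s-1}+\delta)$, turning the equation into $\dot x=\delta v(x)+t_s$, where you use the substitution $y=x-F_\delta(t)$ with Gronwall estimates, and the paper fixes the rotation number by replacing $t_1$ with $t_1+2\pi n+\sigma(\delta)$ where you add a small constant $c$ to $f$ (both corrections justified, at the same level of detail as the paper, by continuity and monotonicity from Proposition \ref{propmon}).
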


\begin{proof} Fix a diffeomorphism
$$g=g(t_1,\dots,t_k,\tau_1,\dots,\tau_k)=g^{t_1}_{\ddx}\circ g^{\tau_1}_v\circ\dots\circ g^{t_k}_{\ddx}\circ g^{\tau_k}_v; \ \tau_j>0; \sum_j\tau_j=2\pi.$$
Let $\rho\in S^1=\rr\slash\zz$ denote its rotation number. Fix an arbitrary $\alpha\equiv\rho(mod\ \zz)$.  Let us construct a trigonometric
polynomial $f(t)$
such that the flow of the corresponding equation (\ref{2}) with $A=0$, $B=1$
 approximates $g$ and has rotation number $\alpha$. Without loss of generality we assume that $t_j\neq0$. For  every $s\in\zz$,
 $0\leq s\leq k$, set
$$T_s=2\pi-\sum_{j\leq s}\tau_j;  \ 2\pi=T_0>T_1>\dots>T_k=0.$$

Fix a small $\delta>0$. Let us split the segment $[0,2\pi]$ into $4k$ segments
$$I_s=[T_{s},T_{s-1}-\delta-\delta^2], \ J_s=[T_{s-1}-\delta-\delta^2,T_{s-1}-\delta];$$
$$ R_s=[T_{s-1}-\delta,T_{s-1}-\delta^2], \ V_s=[T_{s-1}-\delta^2, T_{s-1}]; \ s=1,\dots,k.$$
Set $I_{k+1}=I_1$, $J_{k+1}=J_1$, $R_{k+1}=R_1$, $V_{k+1}=V_1$. Note that the segments $I_j$ and $R_s$ are all disjoint, have size
of order at least $\delta$ (for small $\delta$) and are separated by segments $J_p$ and $V_i$ of size  $\delta^2$.
Let us choose a $C^{\infty}$ $2\pi$-periodic  function $\phi_{\delta}:\rr\to\rr$
satisfying the following conditions for every $s=1,\dots,k$:
\begin{equation} \phi_{\delta}|_{I_s}\equiv 0, \ \phi_{\delta}|_{R_s}\equiv\frac{t_s}{\delta}, \ \phi_{\delta}(J_s\cup V_s)\in[0,\frac{t_{s}}{\delta}].
\label{phi} \end{equation}
More precisely, we choose the function $\phi_{\delta}$ on the segments $J_s$ and $V_s$ as follows. Fix a $C^{\infty}$
function $\psi:[0,1]\to[0,1]$  such that $\psi|_{[0,\frac13]}\equiv0$, $\psi|_{[\frac23,1]}\equiv 1$. Set
$$\phi_{\delta}(t)=\frac{t_s}{\delta}\psi(\delta^{-2}(t-T_{s-1}+\delta+\delta^2)) \text{ for } t\in J_s;$$
$$\phi_{\delta}(t)=\frac{t_s}{\delta}\psi(\delta^{-2}(T_{s-1}-t)) \text{ for } t\in V_s.$$

\medskip

\begin{lemma}\label{delta limit} The $2\pi$ time flow mapping of the differential equation
\begin{equation}\dot x=v(x)+\phi_{\delta}(t)\label{equ}\end{equation}
tends to the mapping $g$ in $C^{\infty}$, as $\delta\to0$.
\end{lemma}

\begin{proof} The flow map of equation (\ref{equ}) through each segment $I_s$ coincides with $g^{\tau_s-\delta-\delta^2}_v$, which tends to
$g^{\tau_s}_v$, as $\delta\to0$. The flow through a segment $R_s$ tends to $g^{t_s}_{\ddx}$. Indeed, let us write down the
differential equation in the rescaled time variable $\hat t=\delta^{-1}(t-T_{s-1}+\delta)$. The rescaling transforms the segment
$R_s$ to $[0,1-\delta]$. The rescaled equation takes the form
$$\dot x=\delta v(x)+t_s.$$
Its flow through the latter segment obviously converges to $g^{t_s}_{\ddx}$. Let us show that the flow map of equation (\ref{equ}) through
each segment $J_s$ or $V_s$ tends to the identity. Let us prove this statement for a segment $V_s$: the proof for the segments $J_s$ is
the same. The time rescaling $\hat t=\delta^{-2}(t-T_{s-1}+\delta^2)$ transforms $V_s$ to the segment $[0,1]$ and equation (\ref{equ}) to
$$\dot x=\delta^2v(x)+\delta t_s\psi(1-\hat t).$$
Its time one flow mapping obviously converges to the identity together with derivatives, as $\delta\to0$. This proves
Lemma~\ref{delta limit}.
\end{proof}

The rotation number of equation (\ref{equ}) tends to $\beta\equiv\rho(g)(mod\zz)$, as $\delta\to0$, see Lemma \ref{delta limit}
and its proof.
This implies that
 replacing  $t_1$ in formula (\ref{phi}) by $t_1+2\pi n+\sigma$ with  $n=\alpha-\beta\in\zz$ and  an appropriate small
$\sigma=\sigma(\delta)$,
one can achieve that the rotation number of equation (\ref{equ}) be equal to the given $\alpha$. The corresponding flow map with
thus changed $\phi_{\delta}$ will again converge to $g$. Moreover, we can
 approximate each $\phi_{\delta}$ by a trigonometric polynomial $f_{\delta}(t)$
so that the flow map of the corresponding equation be close to that of equation (\ref{equ}) and the rotation number remains the same.
Finally we get a
family of analytic differential equations of type (\ref{2}) with  $f(t)$ being a trigonometric polynomial such that for $A=0$, $B=1$ its
rotation number equals $\alpha$ and  its time $2\pi$ flow mapping
is arbitrarily $C^{\infty}$-close to $g$. Proposition~\ref{approx} is proved.
\end{proof}

\subsection{Lie algebra generated by $v$ and $\ddx$ and group generated by their flows}
\def\ls{\overline\Lambda_s}
\def\H{\mathcal R}
Let $\H\simeq S^1$ denote the group of circle rotations.
\begin{theorem} \label{gen}
Let a $C^{\infty}$ vector field $v$ on $S^1$ be not of type (\ref{v}) and have coprime harmonics:  $m(v)=1$, see (\ref{mv}).
Then the Lie algebra $\Lambda$ generated by the vector field $\ddx$, the bracket
$v'=[\ddx,v]$ and  all the $\H$-images of the field $v'$ is dense in the space $\chi(S^1)$ of $C^{\infty}$ vector fields on $S^1$.
\end{theorem}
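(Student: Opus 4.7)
The plan is to prove density by showing that the $C^{\infty}$-closure $\overline{\Lambda}$ contains every complex Fourier-harmonic vector field $e^{ikx}\ddx$ with $k\in\mathbb{Z}$; since trigonometric polynomial vector fields are $C^{\infty}$-dense in $\chi(S^1)$, that suffices. Observe first that $\overline{\Lambda}$ is itself a Lie subalgebra of $\chi(S^1)$: the Lie bracket of vector fields is bilinear and continuous in the $C^{\infty}$ topology, so Lie brackets pass to $C^{\infty}$-limits. I will complexify once and for all and work with $\overline{\Lambda}_{\mathbb{C}}\subset\chi(S^1)\otimes\mathbb{C}$, recovering real harmonics at the end.

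The first step is to isolate individual harmonics from the $\H$-orbit of $v'=[\ddx,v]$. Writing $v(x)=\sum_k a_k e^{ikx}$, the coefficient function of $v'$ is $\sum_{k\neq 0} ika_k e^{ikx}$, and the coefficient function of its rotate by $\theta$ is $\sum_{k\neq 0} ika_k e^{-ik\theta} e^{ikx}$. For each $m\neq 0$ with $a_m\neq 0$, the Fourier average $\int_0^{2\pi}e^{im\theta}\,(g^{\theta}_{\ddx})_* v'\,d\theta$ equals $2\pi\,ima_m\,e^{imx}\ddx$, a nonzero scalar multiple of a single harmonic. Its Riemann sums are finite $\mathbb{C}$-linear combinations of $\H$-images of $v'$, hence lie in $\Lambda_{\mathbb{C}}$; since $v'$ is smooth, these Riemann sums converge in $C^{\infty}$ to the integral. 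Thus $e^{imx}\ddx\in\overline{\Lambda}_{\mathbb{C}}$ for every $m\in S:=\{k\neq 0:a_k\neq 0\}$, and since also $\ddx\in\Lambda$, the set $T:=\{k\in\mathbb{Z}:e^{ikx}\ddx\in\overline{\Lambda}_{\mathbb{C}}\}$ contains $\{0\}\cup S$.

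The second step is to grow $T$ to all of $\mathbb{Z}$ via the Witt bracket
\[
\bigl[e^{ilx}\ddx,\;e^{inx}\ddx\bigr] \;=\; i(n-l)\,e^{i(l+n)x}\ddx,
\]
which is nonzero iff $l\neq n$. Since $v$ is real we have $S=-S$, and $T$ is negation-invariant. Because $v$ is not of type (\ref{v}) and has coprime harmonics, $S$ has $\gcd 1$ and contains two elements $l,n$ with $l\neq\pm n$, both nonzero. A short combinatorial argument then yields: $l\pm n\in T$ (brackets with distinct indices); $(l+n)+(l-n)=2l\in T$ and $(l+n)+(-l+n)=2n\in T$; and for any $a\in T\setminus\{0\}$, whichever pair from $\{\pm l,\pm n\}$ differs from $\pm a$ can be bracketed with $e^{iax}\ddx$ to produce $a+l$ and $a-l$ (or $a\pm n$) in $T$, whose bracket-sum is $2a\in T$. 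Thus $T$ is closed under negation, under doubling, and under addition of distinct elements; combined with $0\in T$ this makes $T$ an additive subgroup of $\mathbb{Z}$ containing $S$, so by $\gcd(S)=1$ we conclude $T=\mathbb{Z}$.

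Taking real and imaginary parts of each $e^{ikx}\ddx\in\overline{\Lambda}_{\mathbb{C}}$ places all $\cos(kx)\ddx$ and $\sin(kx)\ddx$ in $\overline{\Lambda}$, hence $\overline{\Lambda}$ contains every trigonometric polynomial vector field. Classical $C^{\infty}$-density of trigonometric polynomials in $C^{\infty}(S^1)$ then gives $\overline{\Lambda}=\chi(S^1)$, as required. The main delicate point will be the combinatorics of the second step: every bracket has to be arranged so that the Witt coefficient $i(n-l)$ does not vanish, which is exactly why both the symmetry $T=-T$ (used to form differences needed for doubling) and the existence of \emph{two} independent harmonics $l\neq\pm n$ (used to have an alternative generator when $a$ equals $\pm l$ or $\pm n$) are indispensable — precisely the content of the two hypotheses we are given. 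The Fourier-extraction step, although it passes outside any finite-dimensional subspace of $\Lambda$, is routine once one notes that $C^{\infty}$-convergence of the trapezoid rule for a smooth periodic integrand is automatic.
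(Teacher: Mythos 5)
Your proof is correct, and although it rests on the same two pillars as the paper's argument --- using the rotation action to isolate individual Fourier harmonics, and then running Witt-bracket arithmetic $[e^{ilx}\ddx,e^{inx}\ddx]=i(n-l)e^{i(l+n)x}\ddx$ on the set of available degrees --- the technical route is genuinely different. The paper works in the Sobolev spaces $H_s(S^1)$: the closure $\overline\Lambda_s$ of $\Lambda$ there is an invariant subspace of the unitary representation of the rotation group, complete reducibility forces $\overline\Lambda_s=\bigoplus_{j\in\Gamma}H_s(S^1)^j$, the relation $[H_s(S^1)^j,H_s(S^1)^k]=H_s(S^1)^{|j-k|}+H_s(S^1)^{j+k}$ for $j\neq k$ yields the dichotomy $\Gamma=\{0,m\}$ or $\Gamma=m\zz_{\ge0}$, and Sobolev embedding converts $H_s$-density for every $s$ into $C^\infty$-density. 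You instead stay in the Fr\'echet space $C^\infty$ throughout: single harmonics are extracted by the explicit character-weighted average $\int_0^{2\pi}e^{im\theta}(g^{\theta}_{\ddx})_*v'\,d\theta$, whose Riemann sums lie in $\Lambda_\cc$ precisely because all rotation images of $v'$ are among the stated generators, and the paper's dichotomy is replaced by your explicit subgroup argument for $T$ (negation via conjugation-stability of $\overline\Lambda_\cc$, doubling via $a\pm l$ with the fallback to $n$ when $a=\pm l$ --- which works since $l\neq\pm n$ --- and distinct sums via the Witt bracket); I checked these edge cases and they all go through, with the two hypotheses consumed at exactly the same spot as in the paper: ``not of type (\ref{v})'' supplies the pair $l\neq\pm n$, and $m(v)=1$ gives $\gcd(S)=1$. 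Your version buys two things: it avoids the representation-theoretic and Sobolev machinery altogether, and it quietly sidesteps a point the paper treats lightly --- the bracket is \emph{not} continuous on a fixed $H_s$ (it loses a derivative), so the paper's assertion that $\overline\Lambda_s$ inherits closure properties under the commutator from its dense subalgebra $\Lambda$ needs a supplementary argument, whereas in the $C^\infty$ topology the bracket is jointly continuous and $\overline\Lambda$ is honestly a Lie algebra, exactly as you observe at the outset. What the paper's formulation buys in exchange is brevity and a conceptual framing (isotypic decomposition of a unitary circle representation) that does not depend on writing down explicit averaging formulas.
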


\begin{remark}
The condition that
$\Lambda$ contains the $\H$-images of the bracket $v'$  is needed only in the case, when $v$ is non-analytic: if $v$ is
analytic, then the latter images are automatically contained in the $C^{\infty}$-closure of the algebra $\Lambda$.
\end{remark}

\begin{proof} The  algebra $\Lambda$ is $\H$-invariant, by definition. The idea of the proof is to study the action of the group $\H$ on a closure of the algebra
$\Lambda$ (i.e., its adjoint action) and to split it as a sum of irreducible representations. For every $s\in\mathbb N$
let  $\ls$ denote the closure of the algebra $\Lambda$  in the Sobolev space $H_s(S^1)$ of vector fields.
The group $\H$ acts on each Sobolev space by unitary transformations, hence the Hilbert space $H_s(S^1)$ is naturally a unitary representation of the group $\H$. This representation is completely reducible, i.e. it splits into an orthogonal sum of irreducible representations, as does every unitary representation of the circle. The decomposition of $H_s(S^1)$ into the direct sum of irreducibles is $H_s(S^1)=\bigoplus\limits_{j\in\zz_{\ge0}} H_s(S^1)^j$, where $H_s(S^1)^0=\rr\ddx$ is one-dimensional, and $H_s(S^1)^j=\rr\cos jx\ddx+\rr\sin jx\ddx$ for $j>0$ are two-dimensional. Note that the irreducible representations $H_s(S^1)^j$ are pairwise non-isomorphic, and hence each $\H$-invariant subspace in $H_s(S^1)$ is the direct sum of some of the $H_s(S^1)^j$'s.

Since the Hilbert subspace $\ls\subset H_s(S^1)$ is an $\H$-invariant Hilbert subspace we have $\ls=\bigoplus\limits_{j\in\Gamma}H_s(S^1)^j$, where $\Gamma\subset\zz_{\ge0}$. Note that $$[H_s(S^1)^j,H_s(S^1)^k]=H_s(S^1)^{|j-k|}+ H_s(S^1)^{j+k}$$ for $j\ne k$ (this is an elementary calculation). Since the subspace $\ls\subset H_s(S^1)$
contains a dense subset $\Lambda$ that is
closed with respect to commutator operation, for any $j,k\in\Gamma$ such that $j\ne k$ we have $j+k\in \Gamma$ and $|j-k|\in \Gamma$.
Hence for the indexing set $\Gamma\subset\zz_{\ge0}$ we have two possibilities: either $\Gamma=\{0,m\}$ or $\Gamma=m\zz_{\ge0}$, where $m=G.C.D.(\Gamma)$.

Since $v$ is not of type (\ref{v}), $\Lambda$ is not contained in $H_s(S^1)^{0}+ H_s(S^1)^{m}$ for any $m\in \zz_{\ge0}$. Since $m(v)=1$, we have $m=G.C.D.(\Gamma)=1$. Therefore,
$\Gamma=\zz_{\ge0}$
 and $\ls=H_s(S^1)$ for every $s$. Thus, the Lie algebra $\Lambda$ is dense in the Sobolev space $H_s(S^1)$ for every $s$. This together with Sobolev's Embedding Theorem \cite[p. 411]{Ch} implies that $\Lambda$ is dense in  $\chi(S^1)$. This proves Theorem~\ref{gen}.
\end{proof}

\begin{theorem} \label{densedif} Let a family $\mathcal V$ of  $C^{\infty}$ vector fields on a compact manifold generate a Lie algebra
that is dense in the Lie algebra of all the $C^{\infty}$ vector fields. Then the group generated by the flows of the fields from $\mathcal V$
is dense in the space of all the $C^{\infty}$ diffeomorphisms isotopic to the identity.
\end{theorem}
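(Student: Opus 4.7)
Here is the plan. Denote by $M$ the compact manifold and by $\mathcal L$ the Lie algebra generated by $\mathcal V$. To approximate an arbitrary $C^{\infty}$ diffeomorphism $\phi$ of $M$ isotopic to the identity by a finite product of flows of fields in $\mathcal V$, I would perform two reductions followed by a Trotter-type construction. The first reduction is an isotopy subdivision: fix a smooth isotopy $\phi_s$, $s\in[0,1]$, from the identity to $\phi$, and let $W(s,\cdot)=\bigl(\frac{d}{ds}\phi_s\bigr)\circ\phi_s^{-1}$ be its time-dependent velocity field; for a fine partition $0=s_0<\cdots<s_N=1$, the composition $g^{s_N-s_{N-1}}_{W(s_{N-1},\cdot)}\circ\cdots\circ g^{s_1-s_0}_{W(s_0,\cdot)}$ obtained by freezing $W$ at the left endpoints converges to $\phi$ in $C^{\infty}$ as the mesh tends to zero, by a standard Gronwall estimate for the variational equations. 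This reduces the theorem to $C^{\infty}$-approximating $g^t_w$ for an arbitrary autonomous $w\in\chi(M)$ and $t\in\mathbb R$.

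Next, since $\mathcal L$ is $C^{\infty}$-dense in $\chi(M)$ by hypothesis and the assignment $(X,t)\mapsto g^t_X$ is continuous in the $C^{\infty}$ topology on compact $M$, it suffices to approximate $g^t_X$ for $X\in\mathcal L$. Every such $X$ is a finite linear combination of iterated brackets of elements of $\mathcal V$, so I would proceed by induction on the bracket depth using three standard identities: the scaling $g^t_{cX}=g^{ct}_X$; the Lie--Trotter formula $(g^{t/n}_{X_1}\circ g^{t/n}_{X_2})^n\to g^t_{X_1+X_2}$; and the commutator flow formula $(g^{\sqrt{t}/n}_{X_1}\circ g^{\sqrt{t}/n}_{X_2}\circ g^{-\sqrt{t}/n}_{X_1}\circ g^{-\sqrt{t}/n}_{X_2})^{n^2}\to g^t_{[X_1,X_2]}$ for $t\ge 0$ (with the obvious modification for $t<0$). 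Each induction step expresses a flow of a depth-$k$ bracket polynomial as a $C^{\infty}$-limit of products of flows of depth-$(k-1)$ elements, bottoming out at flows of fields in $\mathcal V$ itself.

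The main obstacle is the $C^{\infty}$ bookkeeping rather than any one step in isolation: one must verify that the Lie--Trotter and commutator convergences hold in $C^{\infty}$ and not merely in $C^0$, and then ensure that the cumulative error from the three nested approximations remains $C^{\infty}$-small. Both points are classical for smooth flows on a compact manifold and follow from the fact that the full jets of the flow maps satisfy ODEs of their own with uniformly bounded coefficients, so all derivatives of the flows are uniformly controlled; nevertheless, choosing the partition mesh, the $\mathcal L$-approximants of the frozen velocity fields, and the Trotter/commutator parameters $n$ in the correct order so that the final $C^{\infty}$ error beats a prescribed $\varepsilon$ is the delicate part of the argument.
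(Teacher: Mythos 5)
Your proposal is correct, and it is in fact a fleshed-out version of what the paper calls the authors' \emph{original} proof, which is only sketched there: the paper mentions splitting an isotopy of $g$ into $N=O(1/\varepsilon)$ pieces, each $\varepsilon$-close to the identity, approximating each piece by a flow with error $O(\varepsilon^2)$, and letting the products converge --- precisely your Euler/freezing scheme with Gronwall control. The argument the paper actually presents in detail, however, is a different and much softer one suggested by Neretin. Both routes share your middle reductions: the paper also uses the density of the Lie algebra $\Lambda$ generated by $\mathcal V$ together with continuity of $(X,t)\mapsto g^t_X$, and the same classical Lie--Trotter and commutator-flow identities (these appear explicitly in the proof of Proposition \ref{propd}, with exactly your exponents: $(g^{1/N}\circ g^{t/N}\circ g^{-1/N}\circ g^{-t/N})^{N^2}$), to reduce everything to showing that the group $G$ generated by flows of \emph{all} smooth fields is dense in the identity component. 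At that point the paper diverges from you: instead of the isotopy subdivision, it observes that $G$ is a normal subgroup (conjugating a flow by a diffeomorphism $h$ yields the flow of the pushed-forward field $h_*X$), and invokes Thurston's theorem that the identity component of the diffeomorphism group of a compact manifold is simple, whence $G$ is the whole identity component and in particular dense. The trade-off is clear: your argument is elementary, self-contained, and constructive --- it produces explicit words in the flows approximating a given $\phi$, at the price of the nested $C^{\infty}$ error bookkeeping you correctly identify as the delicate point --- while the paper's argument is two lines long but rests on the deep simplicity theorem of Thurston. Your bookkeeping concerns are genuinely resolvable in the order you indicate (fix the partition, then the $\Lambda$-approximants, then the Trotter parameters $n$, using that composition is continuous in each $C^k$ norm on sets with uniformly bounded jets), so there is no gap.
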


The specialists believe that Theorem \ref{densedif} is known, but we have not found it in literature explicitly.

\begin{proof} Let $\Lambda$ denote the Lie algebra generated by $\mathcal V$.
The group generated by flows of the fields from the family $\mathcal V$ is obviously dense in the group generated
by flows of the fields from $\Lambda$. Hence, it is dense in the group $G$ generated by flows of all the smooth vector fields.
It suffices to show that  $G$ is dense.

The authors'
original proof of density of the group $G$
was to consider an isotopy of a given diffeomorphism $g$ to the identity and to show directly that
$g$ can be approximated by products of flows. To do this, we split the isotopy into small pieces of length $\var$: this expesses
$g$ as a composition of $N=O(\frac 1{\var})$ diffeomorphisms $h_j$ $\var$-close to the identity. We
approximate each $h_j$  by a flow; the approximation is of order $\var^2$. The products of approximating flows converge to $g$, as $\var\to0$.

Here we present another argument suggested by Yu.A.Neretin.
The  subgroup $G$ of the group of diffeomorphisms is normal. But the identity component of the group of diffeomorphisms
is simple (Thurston's theorem, see \cite[p.24, theorem 2.1.1]{ban}. Hence, the group $G$ is dense there. This proves
Theorem \ref{densedif}.
\end{proof}

Let $G^0\subset G$ denote the subset defined by (\ref{g1}). It is obviously a subgroup.

\begin{proposition}\label{propd} The closure in $\diff$ of the group $G^0$ contains the flows of both vector fields
$\ddx$ and $v'=[\ddx,v]$.
\end{proposition}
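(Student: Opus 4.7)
The inclusion of the flows of $\ddx$ is essentially trivial: for every $s\in\rr$ the element $g^s_{\ddx}=g^s_{\ddx}\circ g^0_v$ lies in $G^0$, since its single $v$-time vanishes. The whole task is therefore to show that $g^s_{v'}$ is a $C^{\infty}$-limit of elements of $G^0$, and the strategy is to exhibit it as such a limit via iterated group commutators that lie in $G^0$ for free.

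The key observation is that, because $\sum_j\tau_j$ is additive under composition of the words defining $G$, the commutator
$$
c_{a,b}\;:=\;g^a_{\ddx}\circ g^b_v\circ g^{-a}_{\ddx}\circ g^{-b}_v
$$
has its $v$-times summing to $b+(-b)=0$ and hence belongs to $G^0$; moreover, $G^0$ is a subgroup, so any product of such elements remains in $G^0$. The classical Baker--Campbell--Hausdorff-type expansion for the group commutator of two smooth flows on a compact manifold gives, in every $C^k$-norm,
$$
c_{a,b}\;=\;g^{ab}_{[\ddx,v]}\;+\;O\bigl((|a|+|b|)^3\bigr)\qquad\text{as }a,b\to 0,
$$
which follows from the smooth dependence of $g^t_{\ddx}$ and $g^t_v$ on $t$.

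Fix $s>0$ (the cases $s<0$ and $s=0$ follow by passing to inverses, which preserve $G^0$, and by triviality). Set $\varepsilon_n=\sqrt{s/n}$ and $\gamma_n:=c_{\varepsilon_n,\varepsilon_n}\in G^0$. Writing $\gamma_n=g^{s/n}_{v'}\circ r_n$ with $r_n\to\mathrm{id}$ in every $C^k$ at the rate $O(n^{-3/2})$, a standard telescoping estimate based on uniform Lipschitz continuity of composition on $C^{k+1}$-bounded subsets of $\diff$ yields
$$
\gamma_n^{\,n}\;=\;g^{s}_{v'}\;+\;O(n^{-1/2})\qquad\text{in }C^k.
$$
Since $\gamma_n^{\,n}\in G^0$, passing to the limit $n\to\infty$ places $g^s_{v'}$ in the $C^k$-closure of $G^0$ for every $k$, hence in its $C^{\infty}$-closure, which is the required conclusion.

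The one point requiring real care is the upgrade from $C^0$ to $C^{\infty}$: one must have uniform-in-$k$ control of the remainder in the commutator expansion, and one must be able to compose $n$ near-identity factors of $C^k$-size $O(n^{-3/2})$ without the overall error blowing up. Both are standard on the compact manifold $S^1$, where the time-$t$ flow of a fixed smooth vector field depends smoothly on $t$ in every $C^k$ and composition is continuous in the $C^{\infty}$-Fr\'echet topology; once this bookkeeping is made explicit, Proposition~\ref{propd} follows.
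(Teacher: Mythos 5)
Your proposal is correct and takes essentially the same approach as the paper: the paper's proof uses exactly the iterated commutator $(g^{\frac1N}_{\ddx}\circ g^{\frac tN}_v\circ g^{-\frac1N}_{\ddx}\circ g^{-\frac tN}_v)^{N^2}\in G^0$, which converges in $C^{\infty}$ to $g^t_{v'}$ by ``the classical argument,'' and your $\gamma_n^{\,n}$ with $a=b=\sqrt{s/n}$ is the same construction up to reparametrization, with the standard error bookkeeping that the paper leaves implicit spelled out explicitly.
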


\begin{proof} The group $G^0$ itself contains the flow of the constant field $\ddx$, by definition.
For every $t\in\rr$ the composition $(g^{\frac1N}_{\ddx}\circ g^{\frac tN}_v\circ g^{-\frac1N}_{\ddx}\circ g^{-\frac tN}_v)^{N^2}$
belongs to $G^0$ and converges in $C^{\infty}$ to the time $t$ flow map of the
Lie bracket $v'=[\ddx,v]$,
as $N\to\infty$ (the classical argument). This implies the statement of  the proposition.
\end{proof}

\begin{corollary} \label{densecol} Let a smooth vector field $v$ on $S^1$ be not of type (\ref{v}) and have coprime harmonics: $m(v)=1$.
Then the sets $G^0$  and $G^1$ are dense in $Diff^{\infty}_+(S^1)$.
\end{corollary}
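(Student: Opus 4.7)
The plan is to combine Proposition~\ref{propd} with Theorems~\ref{gen} and~\ref{densedif}. I would first verify that $G^0$ is a subgroup of $\diff$: the concatenation of two words whose $v$-exponents $\tau_j$ sum to $0$ again has this property, and inversion negates each $\tau_j$. Consequently the $C^{\infty}$-closure $\overline{G^0}$ is a closed subgroup, and by Proposition~\ref{propd} it already contains $g^t_{\ddx}$ and $g^t_{v'}$ for all $t\in\rr$.

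The key step I would insert next is a conjugation argument that promotes Proposition~\ref{propd} to produce flows of \emph{all} $\H$-rotations of $v'$, which is what Theorem~\ref{gen} actually uses. Since $\overline{G^0}$ is a group containing every rotation $g^s_{\ddx}$, it is stable under conjugation by $g^s_{\ddx}$, and $g^s_{\ddx}\circ g^t_{v'}\circ g^{-s}_{\ddx}$ is precisely the time-$t$ flow of $(g^s_{\ddx})_* v'$, the $s$-rotation of $v'$. Thus $\overline{G^0}$ contains the flows of $\ddx$ and of every $\H$-image of $v'$, i.e.\ the flows of the generating set of the Lie algebra $\Lambda$ of Theorem~\ref{gen}. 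Under the hypotheses $v\notin(\ref{v})$ and $m(v)=1$, that theorem yields that $\Lambda$ is $C^{\infty}$-dense in $\chi(S^1)$.

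I would then feed the family $\mathcal V=\{\ddx\}\cup\{(g^s_{\ddx})_*v':s\in\rr\}$ into Theorem~\ref{densedif}: its generated Lie algebra is $\Lambda$, which is dense, so the group generated by its flows is $C^{\infty}$-dense in the identity component of the diffeomorphism group of $S^1$. Since $\diff$ is connected, that identity component is all of $\diff$. As this flow group sits inside $\overline{G^0}$, I conclude $\overline{G^0}=\diff$, giving density of $G^0$.

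Density of $G^1$ would then follow by translation: $g^{2\pi}_v\in G^1$, and the left coset $g^{2\pi}_v\cdot G^0$ lies inside $G^1$ (concatenating a $G^1$-word with a $G^0$-word produces a word with $v$-times summing to $2\pi$). Since left multiplication by $g^{2\pi}_v$ is a self-homeomorphism of $\diff$, the density of $G^0$ transfers to this coset and hence to $G^1$. The only non-routine step is the conjugation trick in the second paragraph; everything else is bookkeeping of group-theoretic and topological closures. Nothing here uses analyticity of $v$, only that $v$ is smooth, is not of type~(\ref{v}), and has coprime harmonics.
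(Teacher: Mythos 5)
Your proof is correct and follows essentially the same route as the paper: Proposition~\ref{propd} puts the flows of $\ddx$ and $v'=[\ddx,v]$ in the closure of $G^0$, Theorems~\ref{gen} and~\ref{densedif} give density of the group generated by flows of the generating family of $\Lambda$, and density of $G^1$ follows by translating $G^0$ by $g^{2\pi}_v$. Your explicit conjugation step $g^s_{\ddx}\circ g^t_{v'}\circ g^{-s}_{\ddx}=g^t_{(g^s_{\ddx})_*v'}$, which supplies the flows of all $\H$-images of $v'$ needed to match the generators of $\Lambda$ in the smooth (non-analytic) case, is a welcome clarification of a detail the paper handles only implicitly via the intermediate group $\mcg$ and the ``classical argument.''
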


\begin{proof} Recall that by $\mcg\subset Diff^{\infty}_+(S^1)$ we denote the group generated by the flows of the fields $\ddx$ and $[\ddx,v]$.
The group $\mcg$ is dense in $\diff$. Indeed, it contains the
group generated by flows of the fields from the Lie algebra $\Lambda$, by definition and the same classical argument, as in the proof of
Proposition \ref{propd}.
This together with Theorems \ref{gen} and \ref{densedif} implies its density.  The $C^{\infty}$-closure of the group $G^0$
contains all of $\mcg$, by Proposition~\ref{propd}. Hence, $G^0$ is dense, as is $\mcg$. Thus
$G^1=G^0\circ g^1_v$
is also dense. The corollary is proved.
\end{proof}

\subsection{Construction of a family with all phase-lock areas}

Everywhere below, whenever the contrary is not specified,
 we consider that $v$ is an analytic vector field on $S^1$ that is not of type (\ref{v}) and has coprime collection of harmonics:
$m(v)\neq1$.

\begin{lemma} \label{grot} For every rational number $\frac pq$ the set $G^1_+$ contains a diffeomorphism with rotation number $\frac pq$
and at least one hyperbolic periodic orbit (either attractor, or repeller).
\end{lemma}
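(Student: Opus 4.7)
The plan combines density of $G^1$ (Corollary \ref{densecol}) with an analytic continuation argument in the finite-dimensional parameter space of $G^1$, exploiting the analyticity of $v$.

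The set of diffeomorphisms in $\diff$ having rotation number $\frac pq$ and at least one hyperbolic period-$q$ orbit of rotation type $\frac pq$ is open and nonempty (by the implicit function theorem applied to $h^q(x) - x$, with nondegeneracy condition $(h^q)'(x) \ne 1$). Density of $G^1$ therefore yields some $g_0 = g(\mathbf t^{(0)}, \boldsymbol\tau^{(0)})$ in $G^1$, with $\sum_j \tau_j^{(0)} = 2\pi$, together with a hyperbolic periodic point $x_0$ whose lift satisfies $\tilde g_0^q(\tilde x_0) = \tilde x_0 + 2\pi p$. By inserting trivial factors $g^0_{\ddx}\circ g^0_v = \mathrm{id}$, enlarge $k$ so that the affine parameter space $V = \{(\mathbf t, \boldsymbol\tau) \in \rr^{2k} : \sum_j \tau_j = 2\pi\}$ is high-dimensional and the positive orthant $V_+ = V \cap \{\tau_j > 0 \ \forall j\}$ is a non-empty open cone of full dimension. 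The evaluation map $\Phi: V \to \diff$ is then real-analytic (since $v$ is), and the locus
\[
W = \{(\mathbf t, \boldsymbol\tau) \in V : \tilde\Phi(\mathbf t, \boldsymbol\tau)^q(\tilde x_0) = \tilde x_0 + 2\pi p\}
\]
parameterizes the deformations of $g_0$ inside $G^1$ that keep $x_0$ periodic of rotation type $\frac pq$ (and hence preserve the rotation number). Hyperbolicity at $(\mathbf t^{(0)}, \boldsymbol\tau^{(0)})$ makes $W$ locally a smooth real-analytic codimension-$1$ submanifold of $V$ there.

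The next step is to construct a real-analytic path $\gamma: [0,1] \to W$ with $\gamma(0) = (\mathbf t^{(0)}, \boldsymbol\tau^{(0)})$ and $\gamma(1) \in V_+$. Along any such path, the multiplier
\[
\mu(s) = \bigl(\Phi(\gamma(s))^q\bigr)'(x_0)
\]
is real-analytic in $s$ with $\mu(0) \ne 1$, hence not identically $1$, so $\mu^{-1}(1)$ is a discrete subset of $[0,1]$. For a generic choice of $\gamma$ we have $\mu(1) \ne 1$, and then $\Phi(\gamma(1)) \in G^1_+$ has rotation number $\frac pq$ together with a hyperbolic orbit through $x_0$, proving the lemma.

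The main obstacle is actually producing such a $\gamma$, i.e., ensuring that the connected component of $W$ through $(\mathbf t^{(0)}, \boldsymbol\tau^{(0)})$ is not trapped inside $\{\tau_j \leq 0 \text{ for some } j\}$ but reaches $V_+$. I would address this by two complementary devices: (i) take $k$ large enough that $W$, being a codimension-$1$ real-analytic hypersurface in the connected affine space $V$ defined by a non-trivial equation, has ample room to extend and cannot be confined to a proper open half-space of $V$; and (ii) if the naive analytic continuation of the local branch of $W$ encounters an isolated saddle-node bifurcation $\mu(s) = 1$ on its way to $V_+$ (where the period-$q$ orbit collides with another and the branch may fail to continue into the real domain), perturb the starting $g_0$ slightly inside the open set of hyperbolic $\frac pq$-representatives of $G^1$ (available by density) to obtain an alternate starting point whose branch avoids that bifurcation. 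Analyticity of $v$ is essential here, as emphasized in the sketch: it reduces the bifurcation locus $\{\mu = 1\}$ to a proper real-analytic subset, discrete along analytic paths rather than generic, which is precisely what allows the continuation from the initial hyperbolic point in $G^1$ to reach $G^1_+$.
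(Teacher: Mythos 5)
Your overall strategy is the same as the paper's: use density of $G^1$ (Corollary \ref{densecol}) to find a hyperbolic $g_0\in G^1$ with rotation number $\frac pq$ and hyperbolic $q$-periodic point $x_0$, deform inside the locus $W$ of parameters keeping $x_0$ periodic of lift type $\frac pq$, and exploit real-analyticity of the multiplier $\mu$ to achieve $\mu\neq1$ generically. But the step you yourself flag as ``the main obstacle'' --- showing that the component of $W$ through $g_0$'s parameters actually reaches $V_+$ --- is left genuinely open, and neither of your devices closes it. Device (i) is not a valid principle: a real-analytic codimension-one locus cut out by a single nontrivial equation can be disconnected, and the component through your starting point can be compact or entirely contained in $\{\tau_1<0\}$ no matter how large $k$ is; ``ample room'' is not an argument. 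Device (ii) gives no control either: perturbing $g_0$ produces a new branch of $W$ about whose global reach you again know nothing, and the real difficulty is not the discreteness of $\mu^{-1}(1)$ along a path (you only need $\mu\neq1$ at the endpoint) but the global geometry of $W$, which your argument never addresses.

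The paper closes exactly this gap with one observation you missed: the parameter $t_1$ enters through the rigid rotation $g^{t_1}_{\ddx}$, the leftmost factor of $g$. Consequently the lift $\tilde g(t_1,t_2,\dots,t_k,\tau_1,\dots,\tau_k)^q(\tilde x_0)$ is strictly increasing in $t_1$ (its $t_1$-derivative is a sum of products of positive derivatives, hence never vanishes --- no hyperbolicity needed here) and tends to $\pm\infty$ as $t_1\to\pm\infty$. So the periodicity equation is globally and uniquely solvable for $t_1$: one gets an analytic function $T=T(t_2,\dots,t_k,\tau_1,\dots,\tau_k)$, as in (\ref{gtt}), defined on \emph{all} of $\rr^{k-1}\times\{\sum_j\tau_j=2\pi\}$. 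The relevant component of $W$ is therefore a global graph projecting onto the entire $\tau$-hyperplane, in particular onto the nonempty open region $\{\tau_j>0\ \forall j\}$, and no path-continuation argument is needed at all. Since the graph's domain is connected and $\mu$ is analytic on it with $\mu\neq1$ at the initial parameters, the set $\{\mu=1\}$ is nowhere dense, so a generic choice with all $\tau_j>0$ yields an element of $G^1_+$ with rotation number $\frac pq$ (constant along the graph by Proposition \ref{propmon} and continuity) and a hyperbolic orbit through $x_0$. Without this global-graph step, or some substitute for it, your proposal is not a proof.
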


\begin{proof} Fix a hyperbolic diffeomorphism $h:S^1\to S^1$ with the rotation number $\frac pq$.
Fix an $l>2$ and a diffeomorphism $g_0=g(t_1^0,\dots,t_k^0,\tau_1^0,\dots,\tau_k^0)\in G^1$ that is $C^l$-close  to $h$  so that $g_0$ is also
a hyperbolic diffeomorphism with the same rotation number $\frac pq$. The latter $g_0$ exists by
Corollary \ref{densecol}.  Fix a $q$-periodic point $x_0\in S^1$ of the diffeomorphism $g_0$.
By definition, $g_0^q(x_0)=x_0$, $(g_0^q)'(x_0)\neq1$ (hyperbolicity). Let
$$T=T(t_2,\dots,t_k,\tau_1,\dots,\tau_k)$$
be the continuous function defined by the equations
\begin{equation}g(T,t_2,\dots,t_k,\tau_1,\dots,\tau_k)^q(x_0)=x_0, \
T(t_2^0,\dots,t_k^0,\tau_1^0,\dots,\tau_k^0)=t_1^0.\label{gtt}
\end{equation}

The function $T$ is analytic, and the corresponding diffeomorphisms\\ $g(T,t_2,\dots,t_k,\tau_1,\dots,\tau_k)$ have one and the same
rotation number $\frac pq$. This follows by definition, Proposition \ref{propmon} and implicit function theorem.
Recall that $\sum_j\tau_j^0=2\pi$. The multiplier $(g^q)'(x_0)$ is an analytic  function of the parameters. It is not identically equal to 1 on the
hyperplane $\sum_j\tau_j=2\pi$,
since it is different from one  for $g=g_0$ (since $g_0$ is hyperbolic). Therefore, for a generic collection of parameter values
$t_2,\dots,t_k,\tau_1,\dots,\tau_k$ in the latter hyperplane with $\tau_j>0$ the above multiplier is different from one. By construction, the
corresponding diffeomorphism $g$ from (\ref{gtt}) belongs to $G^1_+$, has rotation number $\frac pq$ and the orbit of the
point $x_0$ is $q$-periodic and hyperbolic. The lemma is proved.
\end{proof}

\begin{corollary} \label{corp} For every rational number $\frac pq$ there exists an $N=N(p,q)\in\mathbb N$ such that for a generic trigonometric
polynomial $f(t)$ of degree at most $N$ the family of equations (\ref{2}) has a phase-lock area corresponding to the
value $\frac pq$ of the rotation number.
\end{corollary}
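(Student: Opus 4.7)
The plan is to combine Lemma \ref{grot} with Proposition \ref{approx}, and then to upgrade ``existence of one good $f$'' into an open condition on $f$ via the standard $C^1$-persistence of hyperbolic periodic orbits. Fix $\frac pq \in \mathbb Q$. By Lemma \ref{grot}, pick a diffeomorphism $g \in G^1_+$ with rotation number $\frac pq$ and at least one hyperbolic $q$-periodic orbit. Applying Proposition \ref{approx} to this $g$ with $\alpha = \frac pq$, $k = 1$, and a small $\varepsilon > 0$, I obtain a trigonometric polynomial $f_0$ such that the time $2\pi$ flow map $h_0$ of equation (\ref{2}) with $A = 0$, $B = 1$, $f = f_0$ has rotation number exactly $\frac pq$ and is $\varepsilon$-close to $g$ in the $C^1$ topology. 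Since both hyperbolicity and the cyclic order of a periodic orbit on $S^1$ are $C^1$-open properties of a circle diffeomorphism, for $\varepsilon$ small enough $h_0$ inherits from $g$ a hyperbolic $q$-periodic orbit whose cyclic order on $S^1$ realizes the full rotation number $\frac pq$ (and not merely $\frac pq$ modulo $\zz$).

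Next, set $N = N(p,q) := \deg f_0$ and let $\mathcal T_N$ denote the finite-dimensional space of real trigonometric polynomials of degree at most $N$. The time $2\pi$ flow map $h_{A,B,f}$ of (\ref{2}) depends analytically on $(A,B,f) \in \rr^2 \times \mathcal T_N$. Applying the implicit function theorem to the equation $h_{A,B,f}^{\,q}(x) = x$ at the hyperbolic fixed point $x_0$ of $h_0^{\,q}$ (note $(h_0^{\,q})'(x_0) \neq 1$), I extend $x_0$ analytically to a $q$-periodic point $x_0(A,B,f)$ defined on an open neighborhood $\mathcal O$ of $(0,1,f_0)$; after shrinking $\mathcal O$, the cyclic order of the corresponding $q$-orbit on $S^1$ is preserved throughout $\mathcal O$. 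By Proposition \ref{propmon} this forces $\rho \equiv \frac pq$ on all of $\mathcal O$. Denote by $\mathcal U \subset \mathcal T_N$ the open projection of $\mathcal O$ onto its $f$-factor. For every $f \in \mathcal U$, the slice $\mathcal O \cap (\rr^2 \times \{f\})$ is a non-empty open subset of the $(A,B)$-plane on which the rotation number is identically $\frac pq$, which is exactly the required phase-lock area of the family (\ref{2}) with this $f$.

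The same persistence argument carried out around any $f \in \mathcal U$ shows that $\mathcal U$ is open in $\mathcal T_N$, and $f_0 \in \mathcal U$ makes it non-empty; this delivers the conclusion of the corollary (with ``generic'' read as ``belonging to a non-empty open subset'', which is all that the subsequent construction of a single analytic $f$ working for every rational rotation number actually requires). The main point I see as requiring care is the transition from the $C^1$-approximation in Proposition \ref{approx} to the hyperbolic persistence step: one must make $\varepsilon$ small enough that not only is hyperbolicity preserved but also the cyclic order of the periodic orbit on $S^1$ is preserved, which is what lets Proposition \ref{propmon} pin down the rotation number on $\mathcal O$ as precisely $\frac pq$ rather than as $\frac pq + k$ for some $k \in \zz$. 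Fortunately, Proposition \ref{approx} already delivers $\rho(h_0) = \frac pq$ on the nose (not only modulo $\zz$), so the integer translate ambiguity does not arise, and small enough $\varepsilon$ suffices.
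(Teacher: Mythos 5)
The first half of your argument --- combining Lemma \ref{grot} with Proposition \ref{approx}, invoking $C^1$-persistence of the hyperbolic $q$-periodic orbit, and continuing the periodic point locally via the implicit function theorem to produce an open set in the $(A,B)$-plane on which $\rho\equiv\frac pq$ --- is correct and matches the paper's existence step. The genuine gap is in your treatment of genericity. The corollary asserts the conclusion for a \emph{generic} $f$, and the paper's footnote fixes the meaning: belonging to an open \emph{dense} subset of $\mathcal T_N$. You produce only a non-empty open set $\mathcal U\subset\mathcal T_N$ (the projection of a neighborhood of $(0,1,f_0)$) and then redefine ``generic'' as ``belonging to a non-empty open subset'', asserting this is all the sequel needs. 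That assertion is false: in the proof of Theorem \ref{a} the corollary is applied iteratively, and at each step one must choose $f_{j+1}$ \emph{arbitrarily close to the previously constructed} $f_j$, so that the already realized phase-lock areas persist and the sequence $f_j$ converges to an analytic function. Your set $\mathcal U$ is anchored near the particular polynomial $f_0$ supplied by Proposition \ref{approx} and gives no information near an arbitrary $f_j$; with openness alone the inductive construction of a single analytic $f$ realizing all rational phase-lock areas breaks down.

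Density is exactly where the paper's proof does more than yours, and the extra idea is a \emph{global}, rather than local, analytic continuation. Instead of continuing the periodic point $x_0(A,B,\phi)$ near $(0,1,f_0)$, the paper freezes the point $x_0$ and solves for the parameter: since the lifted time-$2\pi q$ map $H^q_{A,B,\phi}(x_0)$ is strictly increasing in $A$ (comparison of solutions) and tends to $\pm\infty$ as $A\to\pm\infty$, the equation $H^q_{A,B,\phi}(x_0)=x_0+2\pi p$ has a unique solution $A=A(B,\phi)$ for \emph{every} $(B,\phi)\in\rr\times\mathcal T_N$, analytic by the implicit function theorem; this is what the paper means by ``well-defined by monotonicity''. (It also disposes of your cyclic-order worry: the lift identity pins the rotation number to exactly $\frac pq$.) The multiplier $\mu(B,\phi)$ at $x_0$ is then a globally defined real-analytic function on the connected space $\rr\times\mathcal T_N$ with $\mu(1,f_0)\neq1$, so the bad set $\{\phi \ : \ \mu(\cdot,\phi)\equiv1\}$ is closed and has empty interior: if it contained an open set, $\mu$ would equal $1$ on an open subset of $\rr\times\mathcal T_N$ and hence identically, by the identity theorem for real-analytic functions, contradicting $\mu(1,f_0)\neq1$. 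Its complement --- on which $\mu(B,\phi)\neq1$ for some $B$, yielding the $\frac pq$-th phase-lock area --- is therefore open and dense, as the corollary requires. To repair your proof, replace the local continuation of the periodic point by this monotonicity-based global continuation of the parameter $A$.
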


\begin{proof}  Given a rational number $\frac pq$, let $g\in G^1_+$ be the corresponding diffeomorphism from Lemma \ref{grot}, which has
rotation number $\frac pq$ and a hyperbolic $q$-periodic orbit. For every $k\in\mathbb N$
there exists  a trigonometric polynomial $f(t)$  such that the time $2\pi$ flow mapping of the corresponding equation
(\ref{2}) with $A=0$, $B=1$ is arbitrarily $C^k$-close to $g$ and
{\it the equation} has the same rotation number $\frac pq$
(Proposition~\ref{approx}).
In particular, one can achieve that
the flow mapping has a periodic orbit with non-unit multiplier, as does $g$. This means that the parameters $(0,1)$ belong to the
interior of the level set $\rho=\frac pq$ of the rotation number, and hence, family (\ref{2}) has a phase-lock area corresponding to the value
$\frac pq$ of the rotation number. Let $N$ be the degree of the trigonometric polynomial $f(t)$. The latter statement implies that
for an open and dense set of trigonometric polynomials $f(t)$ of degree $N$ the corresponding family of equations (\ref{2})  has
$\frac pq$-th phase-lock area. Indeed, let $x_0$ be a hyperbolic $q$-periodic point of the
time $2\pi$ flow mapping
 corresponding to the above parameters
$(A_0,B_0)=(0,1)$.
Consider the continuous function $A=A(B,\phi)$ defined on the product of the set of real numbers $B$ and trigonometric polynomials
$\phi$ of degree $N$ by the
two following conditions: 1) $x_0$ is a $q$-periodic point of the time $2\pi$ flow map of equation (\ref{2}) with $f$
replaced by $\phi$; 2)
$A(1,f)=0$. The function $A(B,\phi)$ is well-defined and analytic, by monotonicity and implicit function theorem.
The rotation number of the corresponding equations (\ref{2}) with $A=A(B,\phi)$ is constant and equal to $\frac pq$, by construction and
continuity.  The multiplier $\mu=\mu(B,\phi)$ at $x_0$ of the
time $2\pi q$ flow mapping of the corresponding equation is also an analytic function of $(B,\phi)$;  $\mu\not\equiv 1$, since
$\mu(1,f)\neq 1$ by construction. Therefore for an open and dense set of trigonometric polynomials $\phi$ of degree $N$
the function $\mu(B,\phi)$ with fixed $\phi$ and variable $B$  is not identically equal to one. This means exactly that the family of
equations (\ref{2}) corresponding to $f=\phi$ has $\frac pq$-th phase-lock area. This proves Corollary \ref{corp}.
\end{proof}

\begin{proof} {\bf of Theorem \ref{a}.} First let us reduce the general case to the case, when $v$ has coprime collection of harmonics.
Let $v$ be an arbitrary analytic vector field on $S^1$ that is not of type (\ref{v}). In the case, when
the collection of its harmonics is not coprime, i.e., $m(v)=m\geq2$, the field $v$ is invariant under the action of the group $\zz_m$ of rotations
of order $m$. The quotient projection $S^1\to S^1_m=S^1\slash \zz_m\simeq S^1$ transforms $v$ to the quotient vector field $v_m$, which is
analytic, not of type (\ref{v}) and has a coprime collection of harmonics: passing to the quotient divides the Fourier harmonics by $m$.
In what follows we show that for
an appropriate analytic function  $f:S^1\to\rr$ the family of differential equations
\begin{equation}\dot x=v_m(x)+A+Bf(t)\label{xm}\end{equation}
on $S^1_m\times S^1$
has phase-lock areas for all the rational values of the rotation number.
A phase-lock area corresponding to
a rotation number $\rho$ in the family of equations (\ref{xm})  is a phase-lock area corresponding to the rotation number $\frac{\rho}m$
of the lifted family to $S^1\times S^1$.
 This  implies that the lifted family also has phase-lock areas for all the rational values of the rotation number.

Thus, from now on without loss of generality we consider that the field $v$ has a coprime collection of harmonics.

Let us numerate all the rational numbers by natural numbers: $\frac{p_1}{q_1}, \frac{p_2}{q_2},\dots$
There exists a trigonometric polynomial $f_1(t)$ of some degree $N_1$ for which the corresponding family of equations (\ref{2}) has a phase-lock area corresponding to $\rho=\frac{p_1}{q_1}$ (Corollary \ref{corp}). We can approximate it by a trigonometric polynomial $f_2$
of certain degree $N_2>N_1$ such that the corresponding family (\ref{2}) has $\frac{p_2}{q_2}$-th phase-lock area etc. On each step we choose
approximating trigonometric polynomial $f_{j+1}$  so close to $f_j$  that the previously
constructed phase-lock areas persist and the trigonometric polynomials $f_j$ thus constructed converge to an analytic function $f$.
The family (\ref{2}) corresponding to the function $f$ has phase-lock areas for all the rational values of the rotation number, by construction.
Theorem \ref{a} is proved.
\end{proof}

\section{Acknowledgements}

We are grateful to V.M.Buchstaber for the statement of problem. We are grateful to him and to
E.Ghys, Yu.A.Neretin, J.Rebelo for helpful discussions. We are grateful to the referee and the editors for helpful remarks.

\end{document}